
\documentclass[12pt]{article}

\usepackage{amssymb}
\usepackage{amsmath}
\usepackage{graphicx}
\usepackage{enumerate}
\usepackage{mathrsfs}
\usepackage{latexsym}
\usepackage{psfrag}
\usepackage{mathrsfs}
\usepackage{hyperref}

\setlength{\textwidth}{6.3in}
\setlength{\textheight}{8.7in}
\setlength{\topmargin}{0pt}
\setlength{\headsep}{0pt}
\setlength{\headheight}{0pt}
\setlength{\oddsidemargin}{0pt}
\setlength{\evensidemargin}{0pt}

\newcommand{\qed}{\hfill $\square$\\
\vspace{0.1cm}}

\newtheorem{theorem}{Theorem}[section]

\newtheorem{proposition}[theorem]{Proposition}
\newtheorem{corollary}[theorem]{Corollary}

\newenvironment{proof}{\noindent{\em Proof.}}{\qed}

\newcommand{\gc}{\gamma_c}
\newcommand{\gcqn}{\gamma_c(Q_n)}

\newcommand{\comments}[1]{}

\begin{document}
\title{Spanning Trees and Domination in Hypercubes}

\author{Jerrold R. Griggs
\thanks{Department of Mathematics, University of South Carolina,
Columbia, SC, USA 29208 (griggs@math.sc.edu).
Research supported in part by a grant from the Simons Foundation (\#282896 to Jerrold Griggs)
. }  }
\date{\today}

\maketitle

\begin{abstract}
Let $L(G)$ denote the maximum number of leaves in any spanning tree of a
connected graph $G$.  We show the (known) result that for the $n$-cube $Q_n$,
$L(Q_n) \sim 2^n = |V(Q_n)|$ as $n\rightarrow \infty$.
Examining this more carefully, consider
the minimum size of a connected dominating set of vertices $\gcqn$, which
is $2^n-L(Q_n)$ for $n\ge2$.
We show that $\gcqn\sim 2^n/n$.
We use Hamming codes and an ``expansion" method
to construct leafy spanning trees in $Q_n$.

\end{abstract}


\section{Introduction}\label{sec:Over}

The $n$-cube graph $Q_n$ has $2^n$ vertices, the strings
$a_1\ldots a_n$ on $n$ bits, where two vertices
are adjacent if and only if their strings differ in exactly
one coordinate (where one vertex has 0 and the other has 1).
The $n$-cube is frequently used as a structure for computer
networks, where there are $2^n$ processors corresponding
to the vertices of $Q_n$.  An efficient way to connect all
of the processors, so that they all communicate with each
other, is to take a spanning tree in $Q_n$.

With this in mind, S. Bezrukov imagined it would be
interesting to construct such spanning trees with many
leaves (degree one vertices).
At the IWOCA conference (Duluth, 2014), Bezrukov
proposed the following problem:
Letting $L(G)$
denote the maximum number of leaves in any spanning tree
of a connected simple graph $G$, what can one say about
$L(Q_n)$?
He shared this problem in notes~\cite{Bez}.

For a spanning tree, the non-leaf vertices are connected,
so form a tree themselves, which we may think of as
the backbone of the tree:  All vertices are either in
this backbone, or are leaves adjacent to it.
Bezrukov's question then is equivalent to constructing
a spanning tree of the hypercube with the smallest
backbone.

Notice that the opposite question,
finding the {\em minimum\/} number of
leaves in a spanning tree, is easy: By a simple induction
$Q_n$ has a Hamilton path for all $n\ge1$.
This path is a spanning tree with just two leaves.
We are interested in the other extreme, {\it maximizing\/} the
number of leaves.

Our problem is closely related to the subject of
domination in graphs.
A subset $W$ of the vertex set $V$ of a graph $G=(V,E)$
is a {\em dominating set\/} if
every vertex is either in $W$ or adjacent to
some vertex in $W$.
The {\em domination number\/} $\gamma(G)$ is the
minimum size of any dominating set.

Note that if one pulls off the leaves from a spanning
tree $T$ for a connected graph $G=(V,E)$ with at least
three vertices, then the
remaining vertices $W$ form a dominating set, and,
moreover, what remains of $T$ still connects them.
That is, $W$ forms a connected dominating set.
Conversely, from any connected dominating set we can
span them with a tree and attach any other vertices as
leaves to obtain a spanning tree.
The minimum size of a connected dominating set of $G$ is
called the {\em connected domination number\/} $\gamma_c(G)$.

We see that maximizing the number of leaves of
any spanning tree of such $G$ corresponds
to minimizing the size of a connected dominating set.
From this discussion we obtain for such $G$
\[
L(G) + \gc(G) = |V(G)|.
\]
The simple ordering relationship between these parameters is
\[
1\le \gamma(G)\le \gamma_c(G)\le |V|.
\]

For example, one can readily check that
for the four-cycle $Q_2$,
$\gamma=\gamma_c=L=2$, while for the ordinary
cube~$Q_3$,
$\gamma=2, \gamma_c=L=4$.
For larger $n$ more than half the vertices can be leaves.

The earliest paper we can find that investigates the connected
domination number of a graph is by Sampathkumar and Walikar (1979)
~\cite{SamWal}.
Several studies investigate bounding $L(G)$ for classes of graphs
$G$,  such as those with given
minimum degree \cite{Sto,GriKleSha,KleWes,GriWu}.
Caro {\em et al.\/}~\cite{CarWesYus}\/study both parameters,
and provide more references.
Many papers concern algorithms for finding leafy trees
(or small connected dominating sets).

Searching online we discovered several papers concerning
domination in hypercubes.
These were often done independently of other studies.
The 1990 dissertation of Jha~\cite{Jha} gives a good
general upper bound on $\gamma(Q_n)$,
which is just twice the easy lower bound.
Arumugam and Kala~\cite{AruKal} (1998) focus on domination in
hypercubes.
Duckworth {\em et al.}~\cite{DDGZ}(2001) give good general bounds
on $L(Q_n)$.  It follows that $L(Q_n)\sim 2^n$.
It means asymptotically there is a spanning tree for the
hypercube in which almost all vertices are leaves.
It is nicer to restate their results in terms of connected
domination:

\begin{theorem}\label{thm:previous}~\cite{DDGZ}

\begin{itemize}
\item  Lower bound:  For $n\ge2$, $\frac{\gcqn}{2^n} \ge \frac{1}{n}$
\item  Upper bound:  As $n\rightarrow\infty$, $\frac{\gcqn}{2^n} \le (1 + o(1)) \frac{2}{n}$
\end{itemize}
\end{theorem}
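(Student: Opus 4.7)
The two bounds are independent; I would handle the lower bound by a direct edge-counting argument and the upper bound by a Hamming-code construction augmented with connectors.

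\emph{Lower bound.} Let $W\subseteq V(Q_n)$ be any connected dominating set with $|W|=s$. Since $Q_n[W]$ is connected, it contains at least $s-1$ edges. Counting degrees of $W$-vertices in the $n$-regular graph $Q_n$,
\[
e(W,V\setminus W) \;=\; ns - 2\,e(W) \;\le\; ns - 2(s-1) \;=\; (n-2)s+2.
\]
Every vertex outside $W$ must have a neighbor in $W$, so $e(W,V\setminus W)\ge 2^n-s$. Combining,
\[
2^n - s \;\le\; (n-2)s + 2, \qquad\text{i.e.,}\qquad s \;\ge\; \frac{2^n-2}{n-1}.
\]
A short check shows $(2^n-2)/(n-1) \ge 2^n/n$ whenever $n\ge 2$ (equivalent to $2^n\ge 2n$), giving $\gcqn/2^n \ge 1/n$.

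\emph{Upper bound.} The goal is $\gcqn \le (1+o(1))\cdot 2^{n+1}/n$, and I would assemble $W=C\cup S$ in two stages: a Hamming-code ``core'' $C$ together with a set $S$ of ``connectors''.

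\emph{Stage 1 (core).} When $n=2^k-1$, the binary Hamming code is a perfect dominating set $C$ of size $2^n/(n+1)\sim 2^n/n$. For general $n$, take $m=2^k-1\le n$ maximal and let $C$ be the product of the length-$m$ Hamming code with $\{0,1\}^{n-m}$; it still dominates $Q_n$ and has size $2^{n-k}\sim 2^n/n$.

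\emph{Stage 2 (connectors).} The minimum distance of a Hamming code is $3$, so $C$ is highly disconnected in $Q_n$. Form an auxiliary graph $H$ on $C$ whose edges are pairs of codewords at Hamming distance $3$, pick any spanning tree $T$ of $H$, and for each edge $\{c,c'\}$ of $T$ insert into $S$ the two interior vertices of some length-$3$ path from $c$ to $c'$. Then $C\cup S$ is connected and dominating, with $|C\cup S|\le 3|C|-2$.

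The decisive step is sharpening this naive constant from $3$ to $2$. The key is that a single non-codeword can lie on the chosen length-$3$ path of \emph{many} tree edges, since a vertex at Hamming distance $2$ from a given codeword is typically also at distance $2$ from several others. By choosing $T$ and the connecting paths so that the average connector mediates two tree edges, one should achieve $|S|=(1+o(1))|C|$ and hence $|C\cup S| = (1+o(1))\cdot 2\cdot 2^n/n$. Making this ``reuse'' estimate precise — exploiting the regular coset structure of the Hamming code to control how many edges of $T$ can share each intermediate vertex — is what I expect to be the main obstacle.
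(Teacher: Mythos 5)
Your lower-bound argument is correct and is exactly the paper's proof of Proposition~\ref{thm:lower}: count the edges leaving a connected dominating set $W$ of size $s$, use $e(W)\ge s-1$ from connectivity, and compare with the $2^n-s$ vertices that must be dominated to get $s\ge(2^n-2)/(n-1)\ge 2^n/n$. No issues there.

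The upper bound, however, has two genuine gaps. First, your size estimate for the core is wrong in general: with $m=2^k-1$ maximal subject to $m\le n$, the product of the length-$m$ Hamming code with $\{0,1\}^{n-m}$ has size $2^{n-k}=2^n/(m+1)$, and when $n$ is just below the next Hamming length (e.g.\ $n=2^{k+1}-2$) this is already $\sim 2\cdot 2^n/n$, not $\sim 2^n/n$. Consequently, even if your amortization succeeded in getting $|C\cup S|\le(1+o(1))\cdot 2|C|$, the total could be as large as $(4+o(1))\cdot 2^n/n$, which does not prove the claimed bound $(1+o(1))\cdot 2\cdot 2^n/n$. Second, the decisive ``reuse'' step --- that connectors can be shared so that $|S|=(1+o(1))|C|$ --- is not proved; you correctly identify it as the main obstacle, but the argument as written stops exactly there, so the proof is incomplete even in the favorable case $n=2^k-1$.

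The fix, and the route the paper takes (Proposition~\ref{thm:Exp} and Theorem~\ref{thm:ExpAsy}), is to notice that your stretched code does \emph{not} have minimum distance $3$: the $2^{n-m}$ copies of a single length-$m$ codeword differ only in the last $n-m$ coordinates, so they induce a subcube $Q_{n-m}$ and can be joined into a tree using $2^{n-m}-1$ edges at \emph{zero} cost in new vertices. Only the resulting $2^{m-k}$ trees (one per original codeword) then need to be linked by length-$3$ paths, and the naive two-vertices-per-link bound suffices: this gives $\gamma_c(Q_n)\le 2^{n-m}\cdot 2^{m-k}+2(2^{m-k}-1)$, i.e.
\[
\frac{\gcqn}{2^n/n}<\left(1+\frac{1}{2^{j-1}}\right)\left(1+\frac{j-1}{2^k}\right)\le 2,
\qquad j=n-m,
\]
with no amortization needed. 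The factor $2$ then comes from a trade-off between the two parenthesized terms (the connector overhead is large when $j$ is small, the code overhead is large when $j$ is near $2^k$), not from paying two connectors per star. I recommend you replace Stage~2 by this layering argument; your Stage~1 object is already the right one, you are just connecting it suboptimally.
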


Another 2012 study of hypercubes~\cite{CheSyu} gives values of
$\gcqn$ for small $n$, but unfortunately its formula for
general $n$, stated without proof, is far from correct.
Mane and Waphare~\cite{ManWap} investigate several
generalizations of domination numbers of hypercubes.
The 2017 Master's thesis of Kubo\v n~\cite{Kub}
considers domination in hypergraphs, and uses some of the
same methods as in this paper.

In the next section, we present simple general lower bounds on
$\gamma(Q_n)$ and $\gcqn$.
In Section~\ref{sec:Hamming} we describe the Hamming code
construction that gives a ``perfect dominating set" for
$Q_n$ when $n$ is of the form $2^k-1$.
We give a method to produce a small connected dominating
set, given a dominating set, that leads to an upper bound
on $\gcqn$ for $n=2^k-1$.
A simple inductive method we call doubling is used to
give upper bounds on $\gamma(Q_n)$ and $\gcqn$ for
general~$n$ in Section~\ref{sec:Doubling}.

Where we make new progress is by introducing
in Section~\ref{sec:Exp} a new method
we call expansion, in which we take a minimum dominating
set in each of $2^j$ copies of $Q_N$ and connect them
appropriately to obtain a small connected dominating set in
$Q_n$, where $n=N+j$.
Choosing $N$ and $j$ wisely improves
the best previous upper bound on
$\gcqn$ by a factor of 2.  Indeed, in Section~\ref{sec:Main}
we settle the leading asymptotic behavior of $\gcqn$:

\begin{theorem}\label{thm:main}
As $n\rightarrow\infty$,
$\frac{\gcqn}{2^n} = (1+o(1))\frac{1}{n}.$
\end{theorem}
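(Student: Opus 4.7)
Since the lower bound $\gcqn/2^n \ge 1/n$ is given by Theorem~\ref{thm:previous}, it remains to prove the matching upper bound $\gcqn \le (1+o(1))\,2^n/n$. My plan is to apply the expansion method of Section~\ref{sec:Exp} to the perfect Hamming dominating sets introduced in Section~\ref{sec:Hamming}.

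Given $n$, I would write $n = N + j$ with $N = 2^k - 1$ a Hamming-code dimension and then view $Q_n$ as $Q_N \times Q_j$, i.e., as $2^j$ disjoint copies of $Q_N$ indexed by $c \in \{0,1\}^j$. Placing the Hamming code $H \subset Q_N$ (of size $|H| = 2^N/(N+1)$) in every copy produces a dominating set $H \times Q_j$ of $Q_n$ of size $2^n/(N+1)$. This union partitions into $|H|$ disjoint ``columns'' $\{v\}\times Q_j$ (one for each $v \in H$), each of which is an isomorphic copy of the connected graph $Q_j$. To knit the columns together into a single connected piece, I would add a Steiner-type connector $S \subset Q_N$ in one fixed copy $c_0$, chosen so that $H \cup S$ is connected in $Q_N$. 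Then $(H \times Q_j) \cup (S \times \{c_0\})$ is a connected dominating set of $Q_n$ of size $2^n/(N+1) + |S|$.

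The remaining work is to choose $N$ and $j$ wisely. For the dominating part to give $(1+o(1))\,2^n/n$, one needs $N+1 \ge (1 - o(1))\,n$, and for the connector to contribute only $o(2^n/n)$, one needs $|S|$ small relative to $2^n/n$. When $n$ happens to be within $o(n)$ of a Hamming dimension $2^k - 1$, one can take $j$ small, both conditions follow easily, and the desired bound drops out; to cover values of $n$ close to but above such Hamming dimensions one then combines this with the doubling argument of Section~\ref{sec:Doubling}.

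The principal obstacle is the case when $n$ falls roughly midway between two consecutive Hamming dimensions $2^k - 1$ and $2^{k+1} - 1$: then the best Hamming dimension $N = 2^k - 1 \le n$ only satisfies $N + 1 \ge (n+1)/2$, and a naive expansion yields $2^n/(N+1) \approx 2 \cdot 2^n/n$, merely reproducing the older $(1+o(1))\,2 \cdot 2^n/n$ bound. Achieving the tight constant $1$ in this range is exactly what the ``wise'' choice of $N$ and $j$ must deliver: one has to exploit that the sets $A_c \subseteq Q_N$ placed in copy $c$ need not individually be a full dominating set of $Q_N$ --- neighboring copies $c' \sim c$ in $Q_j$ contribute through the edges between copies, allowing each $|A_c|$ to drop below $\gamma(Q_N)$ and the total $\sum_c |A_c|$ to beat $2^n/(N+1)$. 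Verifying that such a cooperative placement, together with a sufficiently sparse connector, yields a valid connected dominating set of size $(1+o(1))\,2^n/n$ for every $n$ is the main technical step that I expect to be the hard part of the proof.
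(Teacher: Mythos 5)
Your setup is right and your diagnosis of where the difficulty lies is exactly right, but the proposal stops at the point where the actual proof has to happen, so there is a genuine gap. You correctly observe that expanding the Hamming code of $Q_N$, $N=2^k-1$, into $Q_n=Q_N\times Q_j$ gives a connected dominating set of size about $2^n/(N+1)$, which is only $(2+o(1))\,2^n/n$ when $n$ sits midway between consecutive Hamming dimensions, and you correctly identify that getting the constant $1$ requires doing something better in that range. But your proposed fix --- a ``cooperative placement'' in which the layer sets $A_c$ individually fail to dominate $Q_N$ while the total $\sum_c|A_c|$ beats $2^n/(N+1)$ --- is precisely a covering code of $Q_n$ with density tending to the sphere-covering bound, i.e.\ precisely the statement $\gamma(Q_n)\sim 2^n/(n+1)$ for \emph{all} $n$, not just Hamming dimensions. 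You have reduced the theorem to this statement without proving it, and it is a genuinely hard result: it is the theorem of Kabatyanskii and Panchenko~\cite{KabPan} (see~\cite{CHLL}, p.~332), whose proof uses $q$-ary Hamming codes over prime-power alphabets and estimates on the density of primes. No elementary ``wise choice of $N$ and $j$'' within the binary Hamming code framework is known to produce it.

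The paper's route is to take that domination theorem as an external input and then run the expansion the other way around: instead of expanding a Hamming code from $N\approx n/2$, it expands an \emph{arbitrary} minimum dominating set $C$ of $Q_N$ with $N=n-j$ and $j\approx\log_2 n$. The general form of Proposition~\ref{thm:Exp} described in Section~\ref{sec:Main} gives $\gcqn<(2^j+2)\gamma(Q_N)$; with $j\approx\log_2 n$ the factor $(2^j+2)/2^j=1+2^{1-j}$ is $1+O(1/n)$ and $N/n\rightarrow1$, so $\gcqn/(2^n/n)$ is bounded by roughly $(1+\varepsilon)^2$ once $\gamma(Q_N)<(1+\varepsilon)2^N/N$. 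To complete your argument, the missing step is exactly to cite (or reprove) $\lim_{N}\gamma(Q_N)/(2^N/N)=1$ and then apply your expansion construction with $N=n-\lceil\log_2 n\rceil$ rather than with $N$ a Hamming dimension.
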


Restating this in terms of the maximum number of
leaves, it means

\[
L(Q_n)= (1-\frac{1}{n}+o(\frac{1}{n}))2^n.
\]

We conclude with suggestions for further study and
acknowledgements of valuable ideas and support of this
project.

\section{Domination Lower Bounds}\label{sec:Lower}

Let us note some easy lower bounds on our domination parameters for the hypercube $Q_n$.

\begin{proposition}\label{thm:lower}

\begin{itemize}
\item For $n\ge1$, $\gamma(Q_n)\ge 2^n/(n+1).$
\item For $n\ge2$, $\gcqn\ge (2^n-2)/(n-1)\ge 2^n/n.$

\end{itemize}
\end{proposition}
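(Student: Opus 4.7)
The two bounds should both come out of straightforward double-counting, so the proof strategy is to set up the two counting arguments cleanly and then clean up the second inequality at the end.

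For the first bound, the plan is just the sphere-packing / closed-neighborhood inequality. Every vertex $v$ of $Q_n$ dominates exactly $n+1$ vertices (itself and its $n$ neighbors), so if $D$ is any dominating set then the closed neighborhoods $N[v]$ for $v\in D$ must cover all $2^n$ vertices. Summing sizes gives $|D|(n+1)\ge 2^n$, hence $\gamma(Q_n)\ge 2^n/(n+1)$. Nothing delicate is going on here.

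For the connected bound, the plan is a degree-sum argument that uses the connectedness of $W$ to lower-bound the number of edges inside $W$. Let $W$ be a connected dominating set with $|W|=w$, and write $e_{\text{in}}$ for the number of edges of $Q_n$ with both endpoints in $W$ and $e_{\text{out}}$ for the number with exactly one endpoint in $W$. Because $Q_n$ is $n$-regular,
\[
wn \;=\; 2e_{\text{in}} + e_{\text{out}}.
\]
Two observations feed into this. First, since $W$ induces a connected subgraph on $w$ vertices, $e_{\text{in}}\ge w-1$. Second, since $W$ dominates the $2^n-w$ vertices outside it, each such vertex contributes at least one edge to $e_{\text{out}}$, so $e_{\text{out}}\ge 2^n-w$. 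Substituting both bounds gives $wn\ge 2(w-1)+(2^n-w)$, which rearranges to $w(n-1)\ge 2^n-2$ and therefore $\gcqn\ge (2^n-2)/(n-1)$.

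The last step is just the elementary verification that $(2^n-2)/(n-1)\ge 2^n/n$ for $n\ge 2$; cross-multiplying and simplifying reduces this to $2^n\ge 2n$, which is immediate by induction (with equality at $n=2$, matching the small case $\gc(Q_2)=2$ noted earlier). I don't expect a real obstacle in either part; the only thing worth being careful about is using the correct form of ``connected'' in the second argument, namely that $W$ induces a connected subgraph (so the edge count $e_{\text{in}}\ge w-1$ is legitimate), rather than merely that a spanning tree of $G$ restricted to $W$ is connected.
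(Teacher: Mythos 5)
Your proposal is correct and follows essentially the same route as the paper: the sphere-packing bound for $\gamma(Q_n)$, and for $\gcqn$ a degree-sum count in which connectedness supplies at least $w-1$ internal edges (the paper phrases this via a spanning tree $T$ of the dominating set, accounting for $2(w-1)$ of the degree sum) and domination forces at least $2^n-w$ edges leaving $W$. Your explicit verification that $(2^n-2)/(n-1)\ge 2^n/n$ reduces to $2^n\ge 2n$ is a welcome detail the paper leaves implicit.
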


\begin{proof}
A single vertex can dominate at most itself and its $n$ neighbors, leading to the
lower bound on $\gamma(Q_n)$.

Next, consider a connected dominating set of $Q_n$ of size $c$.  There is a tree $T$
on these $c$ vertices using $c-1$ edges from $Q_n$.  The sum of degrees of these $c$
vertices has $2c-2$ accounted for by $T$.  It means that the number of additional
vertices (dominated by those in $T$) is at most $nc-2(c-1)$.  But there are
$2^n-c$ vertices besides $T$.  Rearranging terms gives the stated inequality on
$c$, hence the lower bounds on $\gcqn$.
\end{proof}

\section{Hamming Code}\label{sec:Hamming}

The famous Hamming code gives an elegant construction of a ``perfect dominating set"
in $Q_n$ when $n=2^k-1$ for some integer $k\ge1$.
This means it achieves the lower bound on $\gamma(Q_n)$ in Proposition~\ref{thm:lower}.
Viewing the vertices of $Q_n$ for such an $n$ as $n$-dimensional vectors over
$GF(2)$, the code consists of the $2^{n-k}$ vectors in the row space of a
$(n-k)\times n$ matrix built as follows:
The first $n-k$ columns form the identity matrix, while the rows of the other
$k$ columns consist of all $n-k=2^k-k-1$
vectors of length $k$ with weight (number of ones) at least 2.
The difference between any two vectors in this row space is then a
nonzero vector in the row space, and hence a nonempty sum of rows of the
matrix.  By design, such a sum will always have weight at least three.

Consequently, the $2^{n-k}$ stars in $Q_n$ that are centered at the vectors
in the row space are disjoint.  Each star is a $K_{1,n}$.  By counting,
we see that these stars partition the vertices of $Q_n$.
They form a minimum dominating set for $Q_n$.

As Bezrukov pointed out when he proposed his problem about $L(Q_n)$, for such $n$
we only have to add some edges between leaves of different stars to obtain a
spanning tree with many leaves.  After all, $Q_n$ is connected, and all edges not
used in the stars are between leaves of stars (different stars, in fact).
If we have $c$ components, we need to add $c-1$ edges to obtain a spanning tree;
here, $c=2^{n-k}$.
At worst, each additional edge costs us two new leaves--it would be less, if we are
able to use several edges from the same leaf.
When we finish, we have a spanning tree where the non-leaves are the $c$ star centers
from the Hamming code, as well as at most $2c-2$ vertices that were star leaves.

In fact, we can use this method for any connected simple graph $G$ to build a
spanning tree.  Starting from a minimum dominating set of $c$ vertices, the stars
centered at those vertices cover the entire vertex set (though in general they
are not disjoint, and dominating vertices could even be adjacent).  One can add
at most $c-1$ edges between stars to create a spanning tree.  We obtain this
general bound:

\begin{proposition}\label{thm:ggc}
Let $G$ be a connected simple graph.  Then
\[
\gamma_c(G)\le 3\gamma(G)-2.
\]
\end{proposition}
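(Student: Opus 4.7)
The plan is to formalize the construction sketched in the paragraph immediately preceding the proposition. Let $D$ be a minimum dominating set of $G$, and write $c := \gamma(G) = |D|$. I would start from the observation that the closed neighborhoods $\{N[d] : d \in D\}$ cover $V(G)$, so every vertex lies either in $D$ or is adjacent to a vertex of $D$. The goal will be to augment $D$ by at most $2(c-1)$ extra ``bridging'' vertices to produce a connected dominating set, giving the bound $3c-2$.

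The first technical step is to build an auxiliary multigraph $H$ on vertex set $D$: for each pair $d_1\ne d_2$ in $D$, include an edge of $H$ whenever there exists an edge $xy\in E(G)$ with $x\in N[d_1]$ and $y\in N[d_2]$, and record one such witness $xy$. Because $G$ is connected, any two elements of $D$ can be joined by a path in $G$; labelling each interior vertex of the path with one of its dominators in $D$ converts this path into a walk in $H$, showing that $H$ is connected. I would then fix a spanning tree $T$ of $H$, which uses exactly $c-1$ edges. Now define $W := D \cup \{x_e, y_e : e \in E(T)\}$, where $x_e, y_e$ is the recorded witness for edge $e$.

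Three claims finish the argument. First, $W$ dominates $V(G)$, since $D\subseteq W$ and $D$ already dominates. Second, $G[W]$ is connected: for each tree edge $e=\{d_1,d_2\}$ with witness $x_e y_e$, the vertices $d_1, x_e, y_e, d_2$ all lie in $W$ and form a walk of length at most $3$ in $G[W]$; since $T$ spans $D$, any two centers are linked in $G[W]$, and every remaining vertex of $W$ is a bridging vertex adjacent to some center. Third, $|W|\le c + 2(c-1) = 3c-2$, because each tree edge contributes at most two new vertices (fewer when a witness endpoint already lies in $D$, or when witnesses for different tree edges coincide). The main ``obstacle'' is really just the bookkeeping in the middle step—ensuring that the passage from $G$ to the quotient $H$ is legitimate and that the chosen witnesses truly glue the $N[d]$ together in $G[W]$; the conceptual content is the cost accounting ``$c$ centers, plus at most two extras per tree edge.''
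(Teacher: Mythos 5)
Your proposal is correct and is essentially the paper's own argument (which is only sketched in the paragraph preceding the proposition): cover $V(G)$ by the closed neighborhoods of a minimum dominating set, connect these ``stars'' using at most $\gamma(G)-1$ bridging edges, and charge at most two extra backbone vertices per bridge, for a total of $3\gamma(G)-2$. Your auxiliary-graph/witness formalization just makes the paper's phrase ``add at most $c-1$ edges between stars'' rigorous, including the connectivity of the quotient structure, so there is no substantive difference in approach.
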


Applying this to our Hamming code construction, we obtain

\begin{proposition}\label{thm:hamming}
Let $n=2^k-1$, where the integer $k\ge1$.  Then
$\gamma(Q_n)=2^{n-k}=2^n/(n+1)$, and $\gcqn< (3/(n+1))2^n.$
\end{proposition}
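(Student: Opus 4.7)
The plan is to combine the Hamming code construction developed earlier in this section with the general lower bound in Proposition~\ref{thm:lower} and the conversion bound in Proposition~\ref{thm:ggc}. Both parts of the statement follow almost immediately once we observe the key arithmetic identity that $n=2^k-1$ makes $n+1=2^k$, so that $2^n/(n+1)=2^{n-k}$.

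For the first assertion, I would argue a matching upper and lower bound for $\gamma(Q_n)$. The Hamming code construction recalled above exhibits a family of $2^{n-k}$ pairwise disjoint closed stars $K_{1,n}$ partitioning $V(Q_n)$; in particular the set of star centers (the codewords) is a dominating set, giving $\gamma(Q_n)\le 2^{n-k}$. For the reverse inequality, Proposition~\ref{thm:lower} gives $\gamma(Q_n)\ge 2^n/(n+1)$, and since $n+1=2^k$ this lower bound is exactly $2^{n-k}$. Equality follows, and the dominating set produced by the Hamming code is in fact a perfect one.

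For the second assertion, I would simply apply Proposition~\ref{thm:ggc} with $G=Q_n$. That gives $\gamma_c(Q_n)\le 3\gamma(Q_n)-2$, and plugging in the value of $\gamma(Q_n)$ just established yields $\gamma_c(Q_n)\le 3\cdot 2^{n-k}-2 < 3\cdot 2^{n-k}=\frac{3}{n+1}\,2^n$, as claimed.

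There is no real obstacle here; the statement is a bookkeeping combination of three ingredients that are already in hand (the Hamming code domination number calculation, the counting lower bound, and the generic star-linking construction behind Proposition~\ref{thm:ggc}). The one thing worth verifying carefully is the arithmetic step $n+1=2^k$, so that the upper bound from Hamming meets the lower bound from Proposition~\ref{thm:lower} exactly.
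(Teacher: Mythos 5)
Your proposal is correct and follows exactly the paper's route: the Hamming code partition of $V(Q_n)$ into $2^{n-k}$ disjoint stars gives $\gamma(Q_n)\le 2^{n-k}$, the counting bound of Proposition~\ref{thm:lower} with $n+1=2^k$ gives the matching lower bound, and Proposition~\ref{thm:ggc} then yields $\gcqn\le 3\cdot 2^{n-k}-2<(3/(n+1))2^n$. This is precisely how the paper derives the statement.
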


For this Hamming code case $n=2^k-1$  our tree construction can be
viewed this way:  Starting from a perfect dominating set in $Q_n$, we
take the corresponding $C=2^n/(n+1)$ stars $K_{1,n}$ and add
$C-1$ edges to form a tree with many leaves.
Since all edges for the star centers are used already, each
edge we add will join leaves from two different stars.  At worst, we give up
$2(C-1)$ star leaves (they become part of the tree backbone), plus
the backbone contains the $C$ star centers.
This gives us a connected dominating set of size at most $3C-2\sim 3(2^n/n)$.

If we are fortunate, we don't have to pick two new leaves for each
successive additional edge:  It could be that one or both leaves are already
in the backbone.  However, for each of the $C$ stars we must give up at least
one leaf, in order that the stars connect in the spanning tree.  It means
that the connected dominating set we construct will have at least
$2C\sim 2(2^n/n)$ vertices.

\section{Doubling}\label{sec:Doubling}

So far, we have constructed leafy trees in the $n$-cube only when $n$ has the
special form $2^k-1$.  The $(n+1)$-cube can be viewed as built from two copies
of $Q_n$, with a matching of edges joining the corresponding vertices from each
copy.  This is true for any value of $n$, not just the special values where the
Hamming code exists.

If we take a dominating set for each copy of $Q_n$, we get a dominating
set for $Q_{n+1}$.  Moreover, if we take the same connected dominating set
for each copy, it gives a dominating set for $Q_{n+1}$ that is connected.
We see this simply by adding the edge joining the two copies of a vertex in
the connected dominating set for $Q_n$.  We record these observations about
doubling:

\begin{proposition}\label{thm:doubling}
For all $n\ge1$, $\gamma(Q_{n+1})\le 2\gamma(Q_n)$, and
$\gamma_c(Q_{n+1})\le 2\gcqn$.
\end{proposition}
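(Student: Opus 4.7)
The plan is to exploit the standard recursive structure of the hypercube. I would write $V(Q_{n+1})=V(Q_n)\times\{0,1\}$, where for each $v\in V(Q_n)$ the two corresponding vertices $v^0=(v,0)$ and $v^1=(v,1)$ are joined by the matching edge that comes from the $(n{+}1)$st coordinate. Under this identification $Q_{n+1}$ contains two vertex-disjoint subgraphs $Q_n^0$ and $Q_n^1$, each isomorphic to $Q_n$, with a perfect matching between them. All of the arguments below are statements about this picture.

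For the first inequality, I would take a minimum dominating set $D\subseteq V(Q_n)$ with $|D|=\gamma(Q_n)$, and let $D^*=\{v^0:v\in D\}\cup\{v^1:v\in D\}\subseteq V(Q_{n+1})$. Then $|D^*|=2\gamma(Q_n)$. To verify domination, take any $w^i\in V(Q_{n+1})$, $i\in\{0,1\}$. Since $D$ dominates $Q_n$, either $w\in D$, in which case $w^i\in D^*$, or $w$ has a neighbor $u\in D$ in $Q_n$, in which case $u^i\in D^*$ and $u^iw^i$ is an edge of $Q_n^i\subseteq Q_{n+1}$. Hence $D^*$ is a dominating set, proving $\gamma(Q_{n+1})\le 2\gamma(Q_n)$.

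For the second inequality I would repeat the construction starting from a minimum connected dominating set $C$ of $Q_n$, obtaining $C^*=\{v^0:v\in C\}\cup\{v^1:v\in C\}$ of size $2\gcqn$. The previous paragraph shows $C^*$ dominates $Q_{n+1}$, so only connectivity needs to be checked. The induced subgraph of $Q_{n+1}$ on $\{v^0:v\in C\}$ is isomorphic to the induced subgraph of $Q_n$ on $C$, which is connected by hypothesis; similarly for $\{v^1:v\in C\}$. Finally, picking any $v\in C$, the matching edge $v^0v^1$ lies in $Q_{n+1}$ and joins the two halves, so $C^*$ induces a connected subgraph. This gives $\gamma_c(Q_{n+1})\le 2\gcqn$.

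There is no real obstacle here; the only thing to be careful about is making the bijection between the two copies explicit so that when $u$ and $w$ are neighbors in $Q_n$ the pairs $u^i,w^i$ are genuinely neighbors in $Q_{n+1}$, and that for $v\in V(Q_n)$ the edge $v^0v^1$ really belongs to $Q_{n+1}$. Both follow immediately from the definition of the hypercube as a Cartesian product, which is why the argument that appears in the paragraphs preceding the proposition already essentially constitutes the proof.
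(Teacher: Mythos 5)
Your argument is correct and is exactly the approach the paper takes: the paper's justification (given in the paragraph immediately preceding the proposition) is to place the same (connected) dominating set in each of the two copies of $Q_n$ and, for connectivity, use a single matching edge between the two copies of one chosen vertex. You have simply written out the same construction with the domination and connectivity checks made explicit.
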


Now suppose $n$ is between two consecutive values where the Hamming code
construction is the last section applies, say $n=N+j$, where $k\ge1$,
$N=2^k-1$, and $0\le j\le 2^k$.
We apply the doubling proposition $j$ times, starting from
$Q_N$, and obtain:
\[
\gamma(Q_n)\le 2^j \frac{2^N}{N+1}=\frac{N+j}{N+1} \frac{2^n}{n}< 2\frac{2^n}{n}.
\]
It follows that
\[
\gamma(Q_n)/2^n<2/n\rightarrow 0,
\]
as $n\rightarrow\infty$.
This matches the bound given by Jha~\cite{Jha}.

For connected domination we apply Proposition~\ref{thm:ggc} and obtain:
\[
\gcqn<3\gamma(Q_n)<6\frac{2^n}{n}.
\]
It follows that
\[
\frac{\gcqn}{2^n} < \frac{6}{n} \rightarrow0,
\]
as $n\rightarrow\infty$, confirming our earlier
assertion that there are spanning trees
for hypercubes with almost all vertices being leaves.
Of course, Theorem~\ref{thm:previous} got a better bound than this on
$\gcqn/2^n$;  Our
main result will do even better.

Let us summarize our findings so far.  The domination problem for
$Q_n$ is solved by the Hamming code for $n=N=2^k-1$.
Then as $n=N+j$ grows with $j, 0\le j\le 2^k$, our upper bound
on $\gamma(Q_n)/(2^n/n)$ grows from around 1 to around 2.
However, at $j=2^k$, we have the next Hamming code case,
$n=2^{k+1}-1$, and it is better to switch again to the Hamming
code construction.
It means we have a sawtooth function upper bound, rising from 1 to 2
as $n$ increases, then abruptly dropping back down to 1 and rising
again.  Of course, each tooth covers an interval of length about
$2^k$, so the teeth get wider with $k$.

Owing to our upper bound Proposition~\ref{thm:ggc}, for
connected domination $\gcqn$ has a similar sawtooth upper
bound, but each tooth rises from value 3 to 6.

\section{Expansion}\label{sec:Exp}

We introduce a new method of tree construction that takes advantage
of small dominating sets to produce smaller connected dominating
sets in $Q_n$.
This will  bring down our upper bound for
connected domination, and eventually allow us to solve our
problem asymptotically.

For constructing a spanning tree,
the Hamming code bound punished us by potentially using up
so many leaves to connect the stars.
If we repeatedly double the construction, then it repeats
this penalty over and over.  A better idea could then be
to select one copy (or ``layer") of the base hypercube,
add edges to connect the stellar clusters in just that
layer, and then connect all the copies of each star
center to the one in the special layer.

Describing this explicitly,
let $N=2^k-1$, and  $n=N +j$,
where $0\le j\le 2^k$.
Partition the vertices of $Q_n$ into $2^j$ ''layers"
according to the last $j$ coordinates of the vertices
$(a_1,\ldots,a_n)$.
Each layer induces a $Q_N$, and its vertices are partitioned
into $|C|=2^{N-k}$ stars, according to the Hamming code
partition of $Q_{N}$.
For each star $S$ in the partition of $Q_N$, there are
$2^j$ vertices, one in each layer, that are centers of
the stars corresponding to star $S$.  The centers all
agree in their first $N$ coordinates, so together induce
a subgraph $Q_j$.
By adding $2^j-1$ edges these stars (copies of $S$)
can be connected into a tree.
We now have a forest of $|C|=2^{N-k}$ such trees.

We connect these trees by adding $|C|-1$ edges,
which may as well all be
in the layer ending with 0's.
Each such edge adds at most two vertices
to the connected dominating set we construct.
It is similar to how we
connected the stars in the Hamming code construction.
We record the result of our expansion construction:

\begin{proposition}\label{thm:Exp}
Let $n=N+j$, where $N=2^k-1$ and $1\le j\le 2^k$.
Then $\gamma_c(Q_n)\le 2^j|C|+2(|C|-1),$
where $C$ is the set of $2^{N-k}$ codewords for
the Hamming code in $Q_{N}$.
\end{proposition}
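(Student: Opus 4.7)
The plan is to make the construction sketched just before the proposition explicit, and then verify it meets the advertised bound. Let $C$ denote the Hamming codewords in $Q_N$, and partition $V(Q_n)$ into $2^j$ layers indexed by the last $j$ coordinates, each layer being a copy of $Q_N$ carrying its own Hamming star partition. Take $D_0$ to be the union of all $2^j|C|$ star centers across all layers. The $2^j$ copies of a fixed codeword $c\in C$ agree on their first $N$ coordinates and together span a subcube $Q_j^{(c)}\subset Q_n$; these $|C|$ subcubes are pairwise vertex-disjoint and entirely contained in $D_0$.

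First I would verify that $D_0$ already dominates $Q_n$: any $v\in V(Q_n)$ either has its first $N$ coordinates in $C$, in which case $v\in D_0$, or they form a star leaf in the Hamming partition of its layer, in which case flipping the distinguished first-$N$ coordinate produces a neighbor of $v$ in $D_0$. Next I would identify the components of the induced subgraph $Q_n[D_0]$: since distinct codewords have Hamming distance at least three, no two centers in the same layer are adjacent, so the only edges inside $D_0$ lie in the subcubes $Q_j^{(c)}$, giving exactly $|C|$ connected components.

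The heart of the argument is to stitch these $|C|$ components together using at most $2(|C|-1)$ additional vertices, all drawn from the layer with last $j$ coordinates equal to $0$. I would introduce the quotient graph $H$ whose vertex set is $C$, with two codewords declared adjacent whenever their stars contain adjacent leaves in $Q_N$. Every edge of $Q_N$ not inside a single star joins leaves of two distinct stars (because the stars partition $V(Q_N)$ and the $N$ edges at each center are already used by its own star), so $H$ arises from $Q_N$ by contracting each star to a point and therefore inherits connectivity. Lifting any spanning tree of $H$ to $|C|-1$ leaf-leaf edges in the $0$-layer and adjoining their at most $2(|C|-1)$ endpoints to $D_0$ produces the desired set $D$. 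Each added vertex is a leaf adjacent to a center already in $D_0$, so $D$ still dominates $Q_n$; the lifted tree links the towers $Q_j^{(c)}$ into a single piece through paths of the form center-leaf-leaf-center; and $|D|\le 2^j|C|+2(|C|-1)$ as claimed.

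The only step requiring real care is the connectivity of $H$ together with the observation that lifting each of its edges adds at most two new vertices to the dominating set. Both points flow quickly from the disjointness of the Hamming stars, but they are precisely what turns the informal outline in the preceding paragraphs into a proof of the stated inequality; the remaining content is routine bookkeeping.
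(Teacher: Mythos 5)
Your construction is exactly the one the paper uses: take all $2^j|C|$ star centers (which form $|C|$ disjoint towers, each a $Q_j$), then connect the towers with $|C|-1$ leaf--leaf edges in the all-zeros layer, each costing at most two new backbone vertices. Your added details (the distance-$\ge 3$ argument showing the only edges inside $D_0$ are the tower edges, and the contraction argument for connectivity of the star-quotient of $Q_N$) correctly fill in steps the paper leaves implicit, so this is a valid proof by essentially the same route.
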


We have seen that $\gcqn/2^n\ge1/n$ for all $n\ge2$.
It would
be nice if we could find a tree construction for $Q_n$ that
has so many leaves that its backbone (connected dominating set)
comes close to achieving the lower bound, acting asymptotically
like a perfect dominating set:  What we want is that
$\gcqn/(2^n/n)\rightarrow 1$ as  $n\rightarrow\infty$.
Expansion allows us to come much closer to this goal.
Here is what we can show now:

\begin{theorem}\label{thm:ExpAsy}
For all $n\ge1$, $\gamma_c(Q_n)/2^n<2/n$.
For all $n\ge3$, $\gamma_c(Q_n)/2^n>1/n.$
We have
$\liminf_{n\rightarrow\infty} \gcqn/(2^n/n)=1$.
\end{theorem}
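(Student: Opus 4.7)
The plan is to dispatch the three claims in turn, using the expansion bound of Proposition~\ref{thm:Exp} and the lower bound of Proposition~\ref{thm:lower}. The strict lower bound $\gcqn/2^n>1/n$ for $n\ge 3$ is nearly immediate: Proposition~\ref{thm:lower} already gives $\gcqn\ge (2^n-2)/(n-1)$, and a one-line cross-multiplication shows the right-hand side exceeds $2^n/n$ precisely when $2^n>2n$, which holds for every $n\ge 3$.

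For the upper bound $\gcqn/2^n<2/n$, I would apply the expansion bound with the natural choice $k=\lfloor\log_2 n\rfloor$, $N=2^k-1$, and $j=n-N\in\{1,\ldots,2^k\}$, after dispatching the small cases $n\in\{1,2,3\}$ by direct inspection using the values $\gamma_c(Q_1)=2$, $\gamma_c(Q_2)=2$, $\gamma_c(Q_3)=4$. For $n\ge 4$ one has $k\ge 2$, and Proposition~\ref{thm:Exp} gives $\gcqn\le (2^j+2)\cdot 2^{N-k}-2$. Multiplying through by $n/2^n$ reduces the task to
\[
\frac{n}{2^k}\bigl(1+2^{1-j}\bigr)-\frac{2n}{2^n}<2.
\]
The main step is to show that the leading term is at most $2$, equivalently $2(2^k-1+j)\le 2^j(2^k+1-j)$. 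This holds with equality at $j=1$ and is strict for $j\in\{2,\ldots,2^k\}$, as an elementary algebraic check confirms (the forward difference in $j$ is $2^j(2^k-j-1)-2$, which keeps the function above zero on the relevant range once $k\ge 2$, with the endpoint $j=2^k$ verified directly). In the tight case $j=1$ the subtracted $2n/2^n>0$ supplies the needed strict inequality; otherwise strictness is already present.

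For the liminf equality, the lower bound already gives $\liminf\ge 1$, so it suffices to exhibit a subsequence along which the expansion bound tends to $1$. Taking $n_k=2^k-1+k$ (so that expansion uses $j=k$), each term on the left of the displayed bound tends to its limit as $k\to\infty$: $n_k/2^k=1+(k-1)/2^k\to 1$, $(n_k/2^k)\cdot 2^{1-k}\to 0$, and $2n_k/2^{n_k}\to 0$. Hence $\gamma_c(Q_{n_k})/(2^{n_k}/n_k)\to 1$, and combined with the lower bound the liminf equals $1$.

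The only real obstacle is bookkeeping in the middle step: the critical inequality is tight at $j=1$, so strictness in the upper bound depends on retaining the additive $-2$ in the expansion formula. Without it, the argument would yield only $\gcqn/2^n\le 2/n$. Once the tight-at-the-boundary observation is noted and the monotonicity in $j$ is verified, the remaining parts of the theorem fall out by direct substitution.
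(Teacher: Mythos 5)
Your proof is correct and follows essentially the same route as the paper: the expansion bound of Proposition~\ref{thm:Exp} with $N=2^k-1$ the largest Hamming length not exceeding $n$, the same algebraic reduction (your inequality $2(2^k-1+j)\le 2^j(2^k+1-j)$ is exactly the paper's claim that $(1+2^{1-j})(1+(j-1)/2^k)\le 2$), and a subsequence $n\approx 2^k+k$ for the liminf. If anything you are more careful than the paper, which glosses over the small cases $n\le 3$ (where its product bound can exceed $2$) and the equality at $j=1$; your only slip is the harmless claim that $\gamma_c(Q_1)=2$ rather than $1$.
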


\begin{proof}
We have $n, N, K, j$ as above.  Proposition~\ref{thm:Exp} gives us

\begin{align*}
\gamma_c(Q_n)
&\le 2^j|C|+2(|C|-1)\\
&< (2^j+2)|C|\\
&=(2^j+2)(2^{N-k})\\
&=(2^n + 2^{N+1})/2^k .\\
\end{align*}

We rewrite this as
\[
\frac{\gcqn}{2^n/n}<\left(1+\frac{1}{2^{j-1}}\right)
\left(1+\frac{j-1}{2^k}\right).
\]

In our range $1\le j\le 2^k$, the first term in the product
on the right starts at 2 and decreases exponentially quickly
towards 1.
The second term starts at 1 and grows linearly to just below 2
at the end of this range.
Throughout this whole range in $j$, the product is at most
$2$, giving us the first statement of the theorem.

The second statement, the lower bound on $\gcqn/2^n$, follows
easily from Proposition~\ref{thm:lower}.
For the third statement, we select values of $n$ for which we
can show $\gcqn/(w^n/n)$.  Specifically, given $k$ take
$j=k+1$, so that $n=2^k+k$.  Then in the upper bound inequality
above on $\gcqn/(2^n/n)$, both terms in the product are small
(slightly above 1), and their product $\rightarrow 1$ as
$k\rightarrow\infty$.
The $\liminf$ statement follows.
\end{proof}

An interesting observation is that for $n$ of the
form $2^k-1$, the Hamming code exists, but the
corresponding spanning tree construction for $Q_n$
we described earlier only guarantees that
$\gamma_c(n)/(2^n/n)$ is at most
$3$ for such $n$.  We can do better, constructing a
tree that reduces the bound for such $n$ to 2,
by taking the Hamming
construction for $2^{k-1}-1$ and applying the expansion
method with $j=2^{k-1}$.
Nevertheless, we are still seeking to do better,
aiming to construct trees that bring the bound
down to 1 asymptotically.

\section{Main result}\label{sec:Main}

We have shown how to construct spanning trees for
hypercubes $Q_n$ with many leaves--the proportion of
the $2^n$ vertices that are not leaves is at most
roughly $2/n$.  The idea is to take a Hamming code and
then expand.

Now observe that the expansion idea can be used starting
from {\em any\/} values of $N$, not just a Hamming code value
$2^k-1$, and from {\em any\/} dominating set $C$ in $Q_N$, to
produce a connected dominating set for $Q_n$, $n=N+j$:
Set $C$ gives a partition of $Q_N$ into
stars.  For each star center (vertex in the dominating
set), add edges to connect the $2^j$ copies of the vertex.
In the original $Q_N$ add edges to connect the stars.
We now have a spanning tree for $Q_n$.
Denote by $D$ its backbone, a  connected
dominating set in $Q_n$.
We get an upper bound on $|D|$ as in
Proposition~\ref{thm:Exp}.
Assuming $|C|$ is minimum-sized, we get that
\[
\gcqn< (2^j+2)\gamma(Q_N).
\]

Given $n$ large, let $j$ be an integer close to
$\log n$ (logarithm base 2), and take $N=n-j$.
Then the display above implies that
$\gcqn/(2^n/n)$ is bounded above approximately by
$\gamma(Q_n)/(2^N/N)$.
So an upper bound function for the domination number,
shifted to the right by $\log n$,
yields an approximate upper bound function on
connected domination.

In particular, if it holds that for domination
$\gamma(Q_n)/(2^n/n)$ tends towards 1, its
lower limit, then the same will be true for the similar
expression for connected domination!
Fortunately, what we need is proven
in the 1997
book {\em Covering Codes\/} by Cohen, Honkala, Litsyn, and
Lobstein~\cite{CHLL}, p.332.  They attribute the result to Kabatyanskii
and Panchenko~\cite{KabPan} (1988).  The proof relies on
various coding constructions, including $q$-ary Hamming codes
for prime powers $q$.  It also depends on results on the
density of primes.

We include their result on the domination number as the first
part of our Main Theorem below.  It is restated for convenient
comparison
to our result for connected domination number, the second part,
which can be viewed as a strengthening of the first part.

\begin{theorem}\label{thm:Main}
The domination ratio for hypercubes satisfies~\cite{KabPan}

\[
{\lim_{n\rightarrow\infty}
 \frac{\gamma(Q_n)}{2^n/n}=1.}
 \]

The connected domination ratio satisfies
\[
{\lim_{n\rightarrow\infty}
 \frac{\gamma_c(Q_n)}{2^n/n}=1.}
 \]
\end{theorem}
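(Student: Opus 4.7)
The plan is to combine the generalized expansion inequality developed in this section, $\gcqn < (2^j + 2)\,\gamma(Q_N)$ for $n = N + j$, with the Kabatyanskii--Panchenko asymptotic $\gamma(Q_n) = (1+o(1))\,2^n/n$ that forms the first half of the theorem. The matching lower bound $\gcqn/(2^n/n) \ge 1$ already holds for $n \ge 2$ by Proposition~\ref{thm:lower}, so the entire content of the second statement is the upper bound.

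First I would rearrange the expansion inequality so that every factor has a transparent limit. Dividing through by $2^n/n$ and using $2^n = 2^N \cdot 2^j$ gives
\[
\frac{\gcqn}{2^n/n} < \left(1 + \frac{2}{2^j}\right) \cdot \frac{n}{N} \cdot \frac{\gamma(Q_N)}{2^N/N}.
\]
Next I would choose $j = \lceil \log_2 n \rceil$, as foreshadowed in the paragraph preceding the theorem, and set $N = n - j$. With this choice $2/2^j \le 2/n \to 0$, $N \to \infty$, and $n/N = 1/(1 - j/n) \to 1$ since $j/n \to 0$. Because $N \to \infty$, the first half of the theorem gives $\gamma(Q_N)/(2^N/N) \to 1$. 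The product of three factors each tending to $1$ is $1 + o(1)$, which together with the lower bound pins the limit at exactly $1$.

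The main point to check is that $j$ sits in the right window: we need $j \to \infty$ so that the $2/2^j$ overhead from connecting stars in the base layer vanishes, and $j = o(n)$ so that $n/N \to 1$. Both conditions are comfortably satisfied by $j = \lceil \log_2 n \rceil$, and no further combinatorial work is needed beyond the expansion construction already developed in Section~\ref{sec:Exp} together with the quoted domination asymptotic. In effect, the role of the second statement is to promote the Kabatyanskii--Panchenko bound from ordinary to connected domination essentially for free, paying only the negligible cost of wiring together the $2^j$ copies of each star center.
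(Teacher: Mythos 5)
Your proposal is correct and follows essentially the same route as the paper: both take the generalized expansion bound $\gamma_c(Q_n) < (2^j+2)\gamma(Q_N)$ with $j \approx \log_2 n$ and $N = n-j$, invoke the Kabatyanskii--Panchenko asymptotic for $\gamma(Q_N)$, and close with the lower bound of Proposition~\ref{thm:lower}. Your version merely makes the paper's ``$(1+\varepsilon)^2$'' step more explicit by factoring the ratio into three terms each tending to $1$.
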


\begin{proof}

As noted above, the first statement is proven in the literature.
What is new is the second part, which is a stronger statement.
Building on Theorem~\label{thm:ExpAsy} it suffices to give an upper bound on
$\gcqn/(2^n/n)$ that goes to 1 as $n\rightarrow\infty$.
As in the discussion above, given $n$
we take $j$ is close to $\log n$ and $N=n-j$.
Given $\varepsilon>0$ we have that for all sufficiently large $n$
(and $N$) that
\[
\frac{\gamma(Q_N)}{2^N/N}<1+\varepsilon.
\]
Applying this in the discussion above, gives us for all
sufficiently large $n$ that
\[
\frac{\gcqn}{2^n/n}<(1+\varepsilon)^2,
\]
and the second part follows.

\end{proof}

Formulating this equivalently in terms of leaves in spanning trees,
we obtain:

\begin{corollary}\label{thm:leaves}
As $n\rightarrow\infty$,
$L(Q_n)=2^n (1 - \frac{1}{n} + o(\frac{1}{n})).$
\end{corollary}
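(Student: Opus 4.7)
The plan is to obtain this corollary as an immediate algebraic reformulation of the second part of Theorem~\ref{thm:Main}, using the leaf/connected-domination identity set up in the introduction. There is no new combinatorial work to do; the hard part has already been dispatched in proving the asymptotic $\gcqn = (1+o(1))\,2^n/n$.

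First I would invoke the identity $L(G) + \gc(G) = |V(G)|$, which holds for every connected graph $G$ with at least three vertices, as justified in Section~\ref{sec:Over} by the two-way correspondence between spanning trees and connected dominating sets (strip off the leaves of a spanning tree to get a connected dominating set; conversely, span a connected dominating set by a tree and attach the remaining vertices as leaves). Since $Q_n$ is connected with $2^n\ge 3$ vertices for $n\ge 2$, this yields
\[
L(Q_n) = 2^n - \gcqn.
\]

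Second, I would substitute the asymptotic from Theorem~\ref{thm:Main}, namely $\gcqn = \bigl(1+o(1)\bigr)\tfrac{2^n}{n}$, into this identity. This gives
\[
L(Q_n) = 2^n - \bigl(1+o(1)\bigr)\frac{2^n}{n}
       = 2^n\left(1 - \frac{1}{n} + o\!\left(\frac{1}{n}\right)\right),
\]
which is the stated conclusion. The only subtlety worth flagging is that the $o(1/n)$ error term comes precisely from the $o(1)$ factor in Theorem~\ref{thm:Main}, so no further estimation is required; the step is essentially a one-line rearrangement.
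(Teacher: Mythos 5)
Your proposal is correct and matches the paper's (implicit) argument exactly: the paper derives the corollary from the identity $L(Q_n)+\gamma_c(Q_n)=2^n$ together with the second part of Theorem~\ref{thm:Main}, which is precisely the one-line substitution you describe. No gaps.
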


\section{Further Study}\label{sec:Concl}

Here are some ideas for continuing research.
We were not able to give a simple enough proof that the
domination number that $\gamma(Q_n)/(2^n/n)\rightarrow1$.
We were hoping to give a self-contained proof of our main
result.  The proof in the literature of this domination
result relies on  rather technical explicit coding constructions.
It would be nice if one could devise an algorithm, or use
probabilistic arguments, to prove the existence of
dominating sets in the hypercube that are as small as
the theorem.

Another question asked by Bezrukov~\cite{Bez} remains
open:  For $n=2^k-1$, starting from the stars given
by the Hamming code, how can one add edges to form a
tree with the most leaves (the smallest connected
dominating set)?  We have seen that for large $k$
the connected dominating set will have
size between 2 and 3 times $2^n/n$.  How can one
add edges efficiently, to get close to the lower bound?

What can one say about a more general class of graphs?
For instance, one could consider domination and
connected domination in a generalized grid (box) graph,
such as a Cartesian product of $n$ paths on $p$ vertices.
This graph on $p^n$ vertices is the hypercube when
$p=2$.  Perhaps the more natural graph to study is
a product of $n$ cycles on $p$ vertices.
Note that for $p=4$ it is the same graph as $Q_{2n}$.
Edenfield~\cite{Ede} recently studied products of
cycles and products of complete graphs, both
generalizations of the hypergraph questions in this paper.

Joshua Cooper suggests considering powers of graphs.
That is, for a graph $G=(V,E)$, such as the hypercube,
fix integer $r>0$ and consider the same questions as
before, but for the graph $G^r$:  This graph also
has vertex set $V$, but now edges join vertices at distance
at most $r$ in $G$.  This is motivated by covering
codes of radius $r$.

\section{Acknowledgements}\label{sec:Concl}
We express gratitude to Sergei Bezrukov for sharing his
questions and notes that led to this study.
Particular thanks go to colleague Joshua Cooper
for bringing to our attention the key domination result
$\gamma(Q_n)\sim 2^n/n$ as presented
in~\cite{CHLL}.

We appreciate support for travel to Taiwan to work on this
project from  Mathematics Research Promotion Center Grant 108-17 and
Ministry of Science and Technology Grant 107-2115-M-005-002-MY2.


\end{document}